\documentclass[11pt]{amsart}

\usepackage{amsmath,amssymb,amsthm,mathtools}
\usepackage[hidelinks]{hyperref}

\newcommand{\ZZ}{\mathbb Z}

\newtheorem{theorem}{Theorem}[section]
\newtheorem{proposition}[theorem]{Proposition}
\newtheorem{corollary}[theorem]{Corollary}
\theoremstyle{remark}
\newtheorem{remark}[theorem]{Remark}

\begin{document}

\title[No weakly factor-universal cellular automaton]
{No weakly factor-universal cellular automaton}
\author{Maja Gw\'o\'zd\'z}
\address{University of Zurich}
\address{ETH Z\"urich}
\email{mgwozdz@ethz.ch}
\keywords{cellular automata, weak factor, universality, clock automaton}
\subjclass[2020]{37B15, 37B50, 68Q80}
\date{}

\begin{abstract}
Hochman asked whether there exists a cellular automaton \(F\) such that every
cellular automaton is a factor of \(F\) in the dynamical sense. In particular, we do not require the factor map to commute with the spatial shifts. We show that no such
cellular automaton exists. More generally, if \(F\) weakly factors onto the
radius-zero \(q\)-clock automaton \(C_q^{(k)}\), then every periodic point of
\(F\) has period divisible by \(q\). For a cellular automaton \(F:A^{\ZZ^d}\to A^{\ZZ^d}\), define \(\varphi_F:A\to A\) by \(F(\underline a)=\underline{\varphi_F(a)}\), and let
\(g_F\) be the greatest common divisor of the cycle lengths of \(\varphi_F\).
We prove that if \(C_q^{(k)}\) is a weak factor of \(F\), then \(q\mid g_F\) holds.
It follows that the action of \(F\) on constant configurations yields an explicit divisibility
obstruction to clock weak factors.
\end{abstract}

\maketitle

\section{Introduction}

Hochman \cite{Hochman} asked whether there exists a cellular automaton
\[
F:\Sigma^{\ZZ^d}\to \Sigma^{\ZZ^d}
\]
such that every cellular automaton
\[
G:\Delta^{\ZZ^k}\to \Delta^{\ZZ^k}
\]
is a factor of \(F\) in the dynamical sense. More precisely, there should exist a continuous surjection
\[
\pi:\Sigma^{\ZZ^d}\to \Delta^{\ZZ^k}
\]
that satisfies
\[
\pi\circ F=G\circ \pi,
\]
where we do no require that \(\pi\) commute with the spatial shifts. We follow the terminology of \cite{JalonenKari} and refer to such a map as a weak factor map.

For integers \(q\ge2\) and \(k\ge1\), let
\[
C_q^{(k)}:(\ZZ/q\ZZ)^{\ZZ^k}\to(\ZZ/q\ZZ)^{\ZZ^k}
\]
be the radius-zero \(q\)-clock automaton defined by
\[
(C_q^{(k)}(x))_v=x_v+1 \pmod q
\qquad (v\in\ZZ^k).
\]
For every \(n\ge1\),
\[
(C_q^{(k)})^n(x)=x+n \pmod q
\]
holds coordinatewise. Moreover, every point of \(C_q^{(k)}\) has exact period \(q\).

The question has a negative answer, and the obstruction is elementary.
The idea is that factor maps send periodic points to periodic points, while the radius-zero \(q\)-clock automaton has only points of exact period \(q\). This gives
Proposition~\ref{prop:periodic-obstruction} and
Corollary~\ref{cor:no-universal}. The additional point of Theorem~\ref{thm:main} is that this obstruction admits a finite-state reformulation that depends only on the action of \(F\) on constant configurations.

\begin{theorem}\label{thm:main}
Let \(q\ge2\), \(k\ge1\), and let \(F:A^{\ZZ^d}\to A^{\ZZ^d}\) be a cellular
automaton. Define \(\varphi_F:A\to A\) by
\[
F(\underline a)=\underline{\varphi_F(a)}
\qquad (a\in A).
\]
We use \(\underline a\) to denote the constant configuration with value \(a\).
Let \(m_1,\dots,m_r\) be the cycle lengths of \(\varphi_F\), and set
\[
g_F:=\gcd(m_1,\dots,m_r).
\]
If there exists a continuous map
\[
\pi:A^{\ZZ^d}\to(\ZZ/q\ZZ)^{\ZZ^k}
\]
with
\[
\pi\circ F=C_q^{(k)}\circ \pi,
\]
then
\[
q\mid g_F.
\]
\end{theorem}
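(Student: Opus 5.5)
Since each constant configuration lies on a finite orbit of \(F\) inside the set of constant configurations, the plan is to push these periodic points through \(\pi\) and use that \(C_q^{(k)}\) has no points of period smaller than \(q\) in the divisibility sense. No continuity or shift-commutation properties of \(\pi\) are needed; only the relation \(\pi\circ F=C_q^{(k)}\circ\pi\) on finitely many points.

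First we check that \(\varphi_F\) is well defined and describe its cycles. A cellular automaton commutes with the shifts, so \(F(\underline a)\) is shift-invariant. Hence it is a constant configuration \(\underline{\varphi_F(a)}\). Since \(A\) is finite, every \(a\in A\) is eventually periodic under \(\varphi_F\). Fix a cycle of \(\varphi_F\) of length \(m=m_i\) and a point \(a\) on it. Then \(\varphi_F^{m}(a)=a\), and therefore \(F^{m}(\underline a)=\underline{\varphi_F^m(a)}=\underline a\). Thus \(\underline a\) is a periodic point of \(F\) with period \(m\).

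Next we transfer this period to the clock. Put \(x=\pi(\underline a)\in(\ZZ/q\ZZ)^{\ZZ^k}\). Iterating the conjugacy relation gives \(\pi\circ F^m=(C_q^{(k)})^m\circ\pi\). Hence
\[
x=\pi(\underline a)=\pi(F^m(\underline a))=(C_q^{(k)})^m(x)=x+m \pmod q
\]
coordinatewise. Comparing any single coordinate gives \(m\equiv 0\pmod q\), that is, \(q\mid m_i\). This is the mechanism behind Proposition~\ref{prop:periodic-obstruction}, and it could alternatively be cited directly: every periodic point of \(F\) has period divisible by \(q\).

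Finally, since \(q\mid m_i\) for every cycle length \(m_i\) of \(\varphi_F\), we get \(q\mid\gcd(m_1,\dots,m_r)=g_F\). There is no substantial obstacle. The only points needing care are that constant configurations are mapped to constant configurations, which uses shift-commutation of \(F\) and not of \(\pi\), and that surjectivity of \(\pi\) is not needed. We must also make sure that each \(m_i\) is the length of an actual cycle, so that the point \(a\) really satisfies \(\varphi_F^{m_i}(a)=a\); this holds because we chose \(a\) on the cycle, not on a preperiodic tail.
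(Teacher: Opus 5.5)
Your proof is correct and follows the paper's argument: you choose a representative on each cycle of \(\varphi_F\), observe that \(\underline{a}\) satisfies \(F^{m_i}(\underline a)=\underline a\), and push this through \(\pi\) to get \(q\mid m_i\). The paper cites Proposition~\ref{prop:periodic-obstruction} for that step, while you reprove it inline; both then conclude \(q\mid g_F\).
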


We mention in passing that our context differs from the rescaled local simulation preorder studied by Boyer and Theyssier \cite{BoyerTheyssier}. For cellular automata \(F_1\) and \(F_2\) of the same dimension, their simulation relation is equivalent to the existence of an onto local map \(\phi\) and integers \(t_1,t_2\ge1\) with
\[
\phi\circ F_1^{t_1}=F_2^{t_2}\circ \phi
\]
\cite[Prop.~1]{BoyerTheyssier}. They showed that for each \(d\ge2\), there is no universal surjective \(d\)-dimensional cellular automaton, and that in dimension \(1\), there exists a universal cellular automaton within the class of cellular automata with a persistent state \cite[Thm.~1, Thm.~2]{BoyerTheyssier}.

\section{Obstructions}

For \(a\in A\), we write \(\underline a\in A^{\ZZ^d}\) for the constant
configuration \(v\mapsto a\).

\begin{proposition}\label{prop:periodic-obstruction}
Let \(q\ge2\), \(k\ge1\), and let \(F:A^{\ZZ^d}\to A^{\ZZ^d}\) be a cellular
automaton. If there exists a continuous map
\[
\pi:A^{\ZZ^d}\to(\ZZ/q\ZZ)^{\ZZ^k}
\]
such that
\[
\pi\circ F=C_q^{(k)}\circ \pi,
\]
then for every \(x\in A^{\ZZ^d}\) and every \(n\ge1\),
\[
F^n(x)=x \quad\Longrightarrow\quad q\mid n.
\]
In particular, every periodic point of \(F\) has least period divisible by \(q\).
\end{proposition}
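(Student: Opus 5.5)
The argument is a direct transport of periodicity through the semiconjugacy, and I would carry it out in three short steps. Continuity of \(\pi\) and surjectivity play no role. Only the intertwining relation \(\pi\circ F=C_q^{(k)}\circ\pi\) is used, together with the explicit formula for the iterates of the clock automaton recorded in the introduction.

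First, I would show by induction on \(n\) that the intertwining relation iterates: \(\pi\circ F^n=(C_q^{(k)})^n\circ\pi\) for all \(n\ge1\). The case \(n=1\) is the hypothesis. The inductive step is
\[
\pi\circ F^{n+1}=(\pi\circ F)\circ F^n=C_q^{(k)}\circ\pi\circ F^n=C_q^{(k)}\circ (C_q^{(k)})^n\circ\pi .
\]

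Second, I would fix \(x\) with \(F^n(x)=x\) and set \(y:=\pi(x)\in(\ZZ/q\ZZ)^{\ZZ^k}\). Applying \(\pi\) to \(F^n(x)=x\) and using the first step gives \(y=\pi(F^n(x))=(C_q^{(k)})^n(y)=y+n\) coordinatewise modulo \(q\). Evaluating at any single coordinate, say \(v=0\), yields \(y_0=y_0+n\) in \(\ZZ/q\ZZ\), hence \(n\equiv0\pmod q\), that is, \(q\mid n\). This step uses that \(\ZZ^k\) is nonempty, so that the target space has at least one coordinate to evaluate.

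Finally, the ``in particular'' clause follows by applying this to \(n\) equal to the least period of a periodic point \(x\). There is no genuine obstacle here. The only point needing care is the direction of the implication: periodic points of \(F\) are mapped by \(\pi\) to periodic points of \(C_q^{(k)}\) with the same \(n\), and every point of \(C_q^{(k)}\) has exact period \(q\), so \(q\) must divide \(n\).
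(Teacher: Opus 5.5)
Your proposal is correct and follows essentially the same route as the paper: apply \(\pi\) to \(F^n(x)=x\), use \((C_q^{(k)})^n(y)=y+n \pmod q\) coordinatewise, and conclude \(q\mid n\). Your explicit induction for \(\pi\circ F^n=(C_q^{(k)})^n\circ\pi\) and your evaluation at a single coordinate only spell out steps the paper leaves implicit.
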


\begin{proof}
Let \(x\in A^{\ZZ^d}\) and \(n\ge1\) satisfy \(F^n(x)=x\). It follows that
\[
(C_q^{(k)})^n(\pi(x))=\pi(F^n(x))=\pi(x).
\]
For every \(y\in(\ZZ/q\ZZ)^{\ZZ^k}\), we have
\[
(C_q^{(k)})^n(y)=y+n \pmod q
\]
coordinatewise. We conclude that \((C_q^{(k)})^n\) has a fixed point if and only if \(q\mid n\), thus \(q\mid n\) holds.
\end{proof}

\begin{corollary}\label{cor:no-universal}
No weakly factor-universal cellular automaton exists. The same result holds
even if we restrict the class to injective, surjective, or reversible
cellular automata.
\end{corollary}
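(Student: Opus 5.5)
The plan is to argue by contradiction, using Proposition~\ref{prop:periodic-obstruction} to rule out a universal automaton. Suppose some cellular automaton \(F:\Sigma^{\ZZ^d}\to\Sigma^{\ZZ^d}\) had every cellular automaton as a weak factor. In particular, for a fixed \(k\ge1\) and every \(q\ge2\) there would be a continuous surjection \(\pi_q:\Sigma^{\ZZ^d}\to(\ZZ/q\ZZ)^{\ZZ^k}\) with \(\pi_q\circ F=C_q^{(k)}\circ\pi_q\). It therefore suffices to show that \(F\) has a periodic point, say of least period \(p\ge1\). Then choose a prime \(q\) with \(q\nmid p\), for instance \(q=p+1\) when \(p+1\) is prime; any prime larger than \(p\) works. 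Proposition~\ref{prop:periodic-obstruction} applied to \(\pi_q\) forces \(q\mid p\), which is a contradiction. Note that surjectivity of \(\pi\) is not even used; only the intertwining relation matters.

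The existence of a periodic point is the key step, and it is elementary. The constant configurations are preserved by \(F\): since the local rule is applied uniformly, \(F(\underline a)\) is again constant, and this is exactly the map \(\varphi_F\) of Theorem~\ref{thm:main}. Since \(\Sigma\) is finite, iterating \(\varphi_F\) from any \(a\in\Sigma\) eventually enters a cycle. Its length \(m\le|\Sigma|\) gives a point \(\underline b\) with \(F^m(\underline b)=\underline b\). Hence every period is at most \(|\Sigma|\), so taking \(q\) any prime greater than \(|\Sigma|\) works uniformly. This gives a concrete cellular automaton, namely \(C_q^{(k)}\) with \(q>|\Sigma|\) prime, that is not a weak factor of \(F\).

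For the refined statement about injective, surjective, or reversible automata, I would check two things. First, the witness \(C_q^{(k)}\) itself lies in each class: it is a bijection with inverse \(x\mapsto x-1\), and that inverse is again a radius-zero cellular automaton, so \(C_q^{(k)}\) is reversible, hence injective and surjective. Second, the candidate universal \(F\) ranging over a restricted class does not matter, since the argument above used nothing about \(F\) beyond its being a cellular automaton on a finite alphabet. So whether universality is asked within the class of reversible (respectively injective, surjective) automata, or only for factoring onto members of that class, the same \(C_q^{(k)}\) defeats any candidate.

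I expect the only point needing care to be the claim that \(F\) maps constant configurations to constant configurations. This follows from \(F\) commuting with shifts, so \(F(\underline a)\) is shift-invariant and hence constant. The remaining steps are routine.
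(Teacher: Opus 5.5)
Your proposal is correct and follows the paper's proof: find a periodic point among the finitely many constant configurations, apply Proposition~\ref{prop:periodic-obstruction} to a clock $C_q^{(k)}$ whose $q$ does not divide that period, and use the reversibility of $C_q^{(k)}$ to handle the restricted classes. (One wording slip: not every period of $F$ is at most $|\Sigma|$, only the cycle lengths of $\varphi_F$ are, and that is all your argument uses.)
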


\begin{proof}
Every cellular automaton has a periodic point, since the finite \(F\)-invariant
set of constant configurations contains a cycle. It follows that no cellular automaton can
weakly factor onto \(C_q^{(k)}\) for every \(q\ge2\). Note that each \(C_q^{(k)}\)
is reversible, so the conclusion follows.
\end{proof}

We now prove Theorem~\ref{thm:main}, which is a finite-state refinement
of Proposition~\ref{prop:periodic-obstruction}. For completeness, note that \(\varphi_F\) is well defined. Observe that \(F\) commutes with every spatial shift, so \(F(\underline a)\) is shift-invariant and thus constant.

\begin{proof}[Proof of Theorem~\ref{thm:main}]
For each cycle of \(\varphi_F\), let us pick a representative \(a_i\in A\). It follows that
\(\underline{a_i}\) is an \(F\)-periodic point of exact period \(m_i\).
We now apply Proposition~\ref{prop:periodic-obstruction} to each \(\underline{a_i}\),
and obtain \(q\mid m_i\) for every \(i\). We conclude that
\[
q\mid \gcd(m_1,\dots,m_r).
\]
\end{proof}

\begin{corollary}\label{cor:prime-obstruction}
For every cellular automaton \(F:A^{\ZZ^d}\to A^{\ZZ^d}\), there exists a prime
\(q\nmid g_F\) such that, for every \(k\ge1\), the reversible cellular automaton
\(C_q^{(k)}\) is not a weak factor of \(F\).
\end{corollary}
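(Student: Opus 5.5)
The plan is to combine Theorem~\ref{thm:main} (in contrapositive form) with the elementary fact that a positive integer has only finitely many prime divisors. First I check that \(g_F\) is a well-defined positive integer. The alphabet \(A\) is finite and nonempty, and \(\varphi_F:A\to A\) is a self-map of it. Iterating \(\varphi_F\) from any \(a\in A\) must eventually repeat, so \(\varphi_F\) has at least one cycle. Hence \(r\ge1\), each \(m_i\ge1\), and \(g_F=\gcd(m_1,\dots,m_r)\ge1\). In fact \(g_F\le m_1\le|A|\).

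Next I choose the prime. Since there are infinitely many primes and \(g_F\) has only finitely many prime divisors, some prime \(q\) does not divide \(g_F\). Concretely, any prime \(q>g_F\) works, for example any prime \(q>|A|\). Because \(q\) is prime, \(q\ge2\), so Theorem~\ref{thm:main} applies to this \(q\) and to every \(k\ge1\).

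Now suppose, for contradiction, that for some \(k\ge1\) there is a continuous surjection \(\pi:A^{\ZZ^d}\to(\ZZ/q\ZZ)^{\ZZ^k}\) with \(\pi\circ F=C_q^{(k)}\circ\pi\). Theorem~\ref{thm:main} only needs the intertwining relation, not surjectivity, and it gives \(q\mid g_F\). This contradicts the choice of \(q\). So \(C_q^{(k)}\) is not a weak factor of \(F\), and since \(q\) was chosen independently of \(k\), this holds for every \(k\ge1\) simultaneously.

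Finally, I note why \(C_q^{(k)}\) is reversible: its inverse is the radius-zero map \(x_v\mapsto x_v-1\pmod q\), which is again a cellular automaton. There is no real obstacle here. The only points needing care are that \(g_F\) is a positive integer, which rests on \(A\) being nonempty so that a cycle exists, and that the chosen \(q\) does not depend on \(k\).
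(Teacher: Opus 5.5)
Your proposal is correct and is essentially the paper's proof: pick a prime \(q\nmid g_F\), which exists because \(g_F\) is a positive integer (for instance any prime \(q>|A|\)), and apply Theorem~\ref{thm:main} contrapositively for each \(k\ge1\). Your additional checks are sound and only make explicit what the paper leaves implicit: \(A\neq\emptyset\) forces a cycle, so \(g_F\ge1\); surjectivity of \(\pi\) is not needed; and \(q\) does not depend on \(k\).
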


\begin{proof}
Let us pick a prime \(q\nmid g_F\). Such a prime exists since \(g_F\) is a positive
integer. Theorem~\ref{thm:main} implies that \(C_q^{(k)}\) is not a weak factor
of \(F\) for any \(k\ge1\).
\end{proof}

\begin{remark}
The divisibility condition of Theorem~\ref{thm:main} is optimal within the family
of clock automata. Indeed, for \(F=C_m^{(d)}\), we have
\[
\varphi_F(a)=a+1 \pmod m,
\]
so \(g_F=m\). If \(q\mid m\), then a coordinatewise reduction modulo \(q\) defines a weak factor map
\[
C_m^{(d)}\longrightarrow C_q^{(d)}.
\]
\end{remark}

\end{document}